\newtheorem{theorem}{Theorem}[section]
\newtheorem{proposition}[theorem]{Proposition}
\newtheorem{lemma}[theorem]{Lemma}
\theoremstyle{definition}
\newtheorem{remark}[theorem]{Remark}
\newtheorem{example}[theorem]{Example}
\begin{document}

\title[Mean-convex flow developing infinitely many singular epochs]{An example of a mean-convex mean curvature flow developing infinitely many singular epochs}
\author{Tatsuya Miura}
\address{Graduate School of Mathematical Sciences, The University of Tokyo, 3-8-1 Komaba, Meguro, Tokyo, 153-8914 Japan}
\email{miura@ms.u-tokyo.ac.jp}
\keywords{Mean curvature flow; Singularity; Smoothing effect; Mean-convex.}
\subjclass[2010]{53C44, and 35B65}

\begin{abstract}
In this paper, we give an example of a compact mean-convex hypersurface with a single singular point moved by mean curvature having a sequence of singular epochs (times) converging to zero.
\end{abstract}

\maketitle

\section{Introduction}

The regularity and singularity of mean curvature flow, which is a one-parameter family of hypersurfaces in $\mathbb{R}^{n+1}$ moving by its mean curvature, have been studied by many authors.
There is an excellent survey paper \cite{ColdingMinicozziPedersen15} on this issue from classical results to recent developments.

In particular it is well-studied for mean-convex flows, namely mean curvature flows of hypersurfaces with positive mean curvature.
A well-known conjecture about such flows is: any mean-convex flow from a smooth initial surface develops singularities only at finitely many epochs (for example see \cite{Wang11}).

The main result of this paper, Theorem \ref{maintheorem}, shows that there is a chance that the set of singular epochs is not finite even if an initial surface has only one singular point.
Such an example is rigorously given in the section 3 but rough shape of the initial surface is as drawn in Figure \ref{figgamma0}.

Our initial surface is constructed by dilation.
Thus it is self-similar near the singularity.
Using the self-similarity, we prove that the flow from the surface pinches at infinitely many epochs (times) $t_k\downarrow 0$ by comparing Angenent's doughnuts \cite{Angenent92} and balls.
One may be tempted to construct such a surface by using a rescaled periodic function.
However, this simple idea does not work directly, although idea of rescaling is important.
Our construction looks slightly complicated because we have to connect a ball like shape in a suitable way.
An advantage of our construction is that it is easy to confirm the desired properties like mean-convexity.
The feature of our construction is explained in detail in Remark \ref{remperiod}.

We describe the result in terms of the level set method introduced by Chen-Giga-Goto \cite{ChenGigaGoto91} and Evans-Spruck \cite{EvansSpruck91} (see a self-contained book by Giga \cite{Giga06} for details).
This method can define a {\it (generalized) interface evolution} of mean curvature flow for all times through singularities.
The interface evolution is uniquely determined by a given initial surface, although in general it is not necessary unique in the sense of ``surface evolution''.
The reason is that interface evolutions can fatten, namely, have an interior point at some time.
However, our example is now mean-convex in the sense of White \cite{White00} hence it does not fatten.

In the rest of this section, we mention some related known results.
Our example is useful to contrast known results.

For any smooth compact initial surface, there is a unique classical solution of mean curvature flow at least locally in time (see e.g.\ \cite{Bellettini13}\cite{Mantegazza11}).
However it must develop singularities in finite time and it is complicated generally.
The first non simple singularity is given by Grayson \cite{Grayson89} called ``neck-pinching'', which inspires the result of this paper.

On the other hand, mean curvature flow has a smoothing effect due to its parabolicity.
A remarkable well-known result by Ecker and Huisken \cite{EckerHuisken91} is that any uniform Lipschitz initial surface admits a classical solution of mean curvature flow locally in time.
This result is proved by establishing local interior regularity estimates.
Some other results are also known.
For example, Evans and Spruck  \cite{EvansSpruck92b} proved a local interior regularity result for a level set flow provided that it is given locally as the graph of a continuous function.
In addition, the recent works of Tonegawa and his co-authors \cite{KasaiTonegawa14}\cite{TakasaoTonegawa13}\cite{Tonegawa14} show the local existence of a classical solution for $C^1$ initial surfaces in terms of the Brakke flow (with a transport term).
Our example suggests that these smoothing effects are crucially based on that an initial surface is locally represented by a graph.

As mentioned above, mean-convex flows are well-studied compared with general flows.
There are many results about the size or nature of the singular set of mean-convex flows (See e.g.\ \cite{ColdingMinicozzi14}\cite{ColdingMinicozzi15}\cite{White00}\cite{White03}\cite{White13} or the subsection 2.3 in \cite{ColdingMinicozziPedersen15} for details).
In particular, if $n=2$, the mean-convex interface evolution is smooth for almost every time \cite{White00}.
Our example shows that the set of singular times can be an infinite set (in a finite time interval).

The case that an initial surface is given by rotating a graph is studied even better in \cite{AltschulerAngenentGiga95}.
In particular, an axisymmetric compact smooth initial surface develops singularities only at finitely many epochs.
Our example is axisymmetric thus also complements the above result.

Finally, we mention the case of curve shortening flow ($n=1$).
In this case, recently, Lauer proved that any finite length Jordan curve is smoothed out instantly \cite{Lauer13}.
This result is in marked contrast to our higher dimensional result ($n\geq2$).
In fact, our example is of finite area and the image of a continuous injection from the $n$-dimensional sphere, although the result as \cite{Lauer13} is not valid.

\begin{figure}[tb]
	\begin{center}
		\includegraphics[width=75mm]{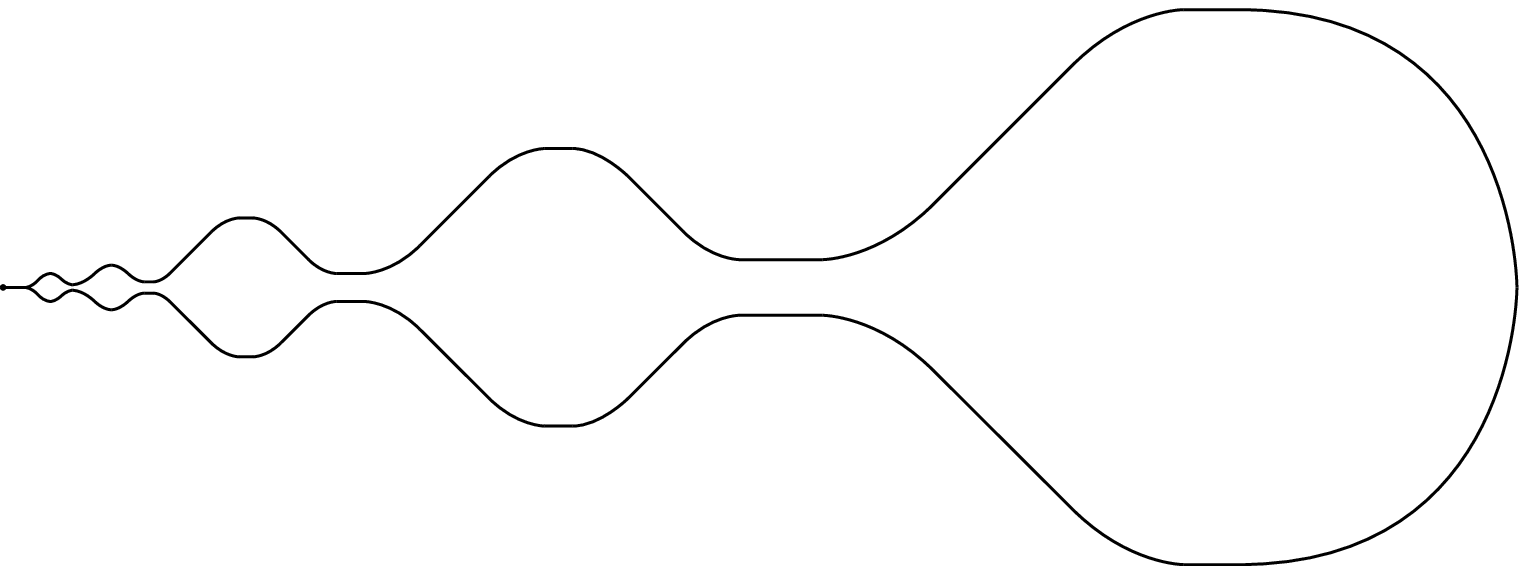}
	\end{center}
	\caption{example}
	\label{figgamma0}
\end{figure}

\section{An example of mean-convex hypersuface developing infinitly many singular epochs moved by mean curvature}

In this section we state our main theorem rigorously.
Throughout this paper, for a given open set $D_0$ (resp.\ boundary $\Gamma_0=\partial D_0$, closed set $E_0=D_0\cup\Gamma_0$) in $\mathbb{R}^{n+1}$, the set $D$ (resp.\ $\Gamma$, $E$) in $\mathbb{R}^{n+1}\times[0,\infty)$ denotes the open (resp.\ interface, closed) evolution of mean curvature flow and the set $D_t$ (resp.\ $\Gamma_t$, $E_t$) in $\mathbb{R}^{n+1}$ denotes its cross-section at time $t>0$.
See \cite{Giga06} for details of the above definitions.

Here is our main theorem.

\begin{theorem}\label{maintheorem}
Let $n \geq 2$.
There exists a compact connected axisymmetric initial hypersurface ${\Gamma}_0\subset\mathbb{R}^{n+1}$, which is the boundary of some bounded open set $D_0$ of finite perimeter, satisfying the following conditions:
\begin{itemize}
\item[{\rm(1)}] all points except one point in $\Gamma_0$ are $C^{\infty}$-regular and mean-convex points,
\item[{\rm(2)}] the generated evolutions satisfy the monotonicity $E_{t+h}\subset D_{t}$ for any $t\geq0$ and $h>0$, in particular $\Gamma_t$ does not fatten for any $t\geq0$,
\item[{\rm(3)}] for any $\tau>0$ there exists $0<t<\tau$ such that $\Gamma_t$ has a singularity.
\end{itemize}
\end{theorem}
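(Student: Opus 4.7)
The plan is to construct $\Gamma_0$ as the boundary of an axisymmetric domain obtained by rotating, around the $z$-axis, a profile curve composed of countably many ``bead-and-neck'' motifs glued end-to-end and scaled geometrically by a common factor $\lambda\in(0,1)$, so that the $k$-th motif has diameter of order $\lambda^{k-1}$ and the chain accumulates at the origin, the unique singular point. A basic motif consists of a nearly spherical bead joined smoothly to a thin tubular neck; self-similarity under dilation by $\lambda$ lets all estimates at the $k$-th scale be obtained from those at the first by the parabolic scaling $t\mapsto\lambda^{2(k-1)}t$. Finite perimeter follows from $\sum_{k}\lambda^{n(k-1)}<\infty$, while axisymmetry and connectedness are built into the construction.

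To verify (1), I would compute the mean curvature using the standard axisymmetric formula involving the planar curvature of the profile and the $(n-1)/r$ rotational term: each bead is a small perturbation of a round sphere, so $H>0$ directly, and each neck is a thin tube where the rotational term dominates, so $H>0$ again. The delicate step, emphasized in the introduction, is designing the transition arcs at each bead-neck junction so that $H>0$ is preserved uniformly; this is what motivates an explicit, non-periodic profile rather than a naive rescaled periodic function. Mean-convexity of each $P_{k}$ then follows by scale-invariance of $H$ from mean-convexity of the fundamental motif $P_{1}$.

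To prove (3), for each $k\geq 1$ I would exhibit a singular time $t_{k}=O(\lambda^{2(k-1)})$ by combining two self-similar barriers at scale $\lambda^{k-1}$. Inside the $k$-th neck place the rescaled Angenent torus $A_{k}=\lambda^{k-1}A_{1}\subset D_{0}$, where $A_{1}$ is a fixed doughnut of \cite{Angenent92} fitted inside the first neck; by MCF scale-invariance its generalized evolution collapses to its central circle at time $\tau_{k}=\lambda^{2(k-1)}\tau_{1}$. Outside the $k$-th bead-and-neck region place a round ball that, shrinking under MCF, forces the portion of $E_{t}$ trapped inside it to disappear by some time $t_{k}^{\mathrm{out}}=O(\lambda^{2(k-1)})$. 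Calibrate $A_{1}$ so that the doughnut is alive strictly after that outer ball has vanished; then by the avoidance principle for level set flow (\cite{Giga06}), the only way to reconcile $A_{k,t}\subset D_{t}$ with the disappearance of $E_{t}$ from inside the outer ball is that $\Gamma$ has already pinched inside the $k$-th neck at some time $t_{k}\leq t_{k}^{\mathrm{out}}$. Since $t_{k}\downarrow 0$, condition (3) follows.

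Property (2) then follows from the mean-convexity established in (1) and White's framework \cite{White00}: any small inward smooth perturbation of $\Gamma_{0}$ evolves strictly inside $D_{0}$, and passing to the generalized evolution yields both $E_{t+h}\subset D_{t}$ for every $t\geq 0$ and $h>0$ and non-fattening of each $\Gamma_{t}$. The central difficulty is the explicit geometric construction underlying (1): producing a $C^{\infty}$ axisymmetric profile that is mean-convex across every junction, scales exactly by $\lambda$ between consecutive motifs, and is simultaneously roomy enough to inscribe an Angenent doughnut in each neck and circumscribe a ball around each bead-and-neck unit. Once such a profile is written down, the remaining comparisons are routine applications of the parabolic avoidance principle and White's mean-convex theorem.
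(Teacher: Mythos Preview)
Your overall plan---a self-similar bead-and-neck chain with scale-invariant barrier arguments---is the same as the paper's, but two of the three verifications contain genuine gaps.

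For (3), you have the roles of the Angenent torus and the round spheres reversed. In the standard neck-pinch argument (and in the paper), the doughnut is placed \emph{outside} $\Gamma_0$, encircling the thin neck so that $\Gamma_0$ threads through its hole, while round spheres sit \emph{inside} $D_0$, one in each adjacent bead; the calibration is that the doughnut collapses \emph{before} the spheres. Avoidance then forces $\Gamma_t$ to keep passing through the shrinking hole of the torus, so by the torus's extinction the neck has been squeezed to the axis, while the surviving inner spheres certify that $D_t$ is still nonempty on both sides---hence a topology change. Your configuration (torus $A_k\subset D_0$, ball enclosing the unit from outside, torus required to outlive the ball) cannot be calibrated: since the torus then lies inside the ball, comparison forces it to vanish no later than the ball, so the inequality of extinction times you ask for is impossible. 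Moreover, ``the portion of $E_t$ trapped inside the outer ball'' is not a well-defined object for the level set flow.

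For (2), invoking White's framework via \emph{inward} smooth perturbations does not suffice, because $\Gamma_0$ is not smooth at the accumulation point. Inner approximations $D_0^\varepsilon\uparrow D_0$ give, by monotone convergence, information about the open evolution $D_t$ but not about the closed evolution $E_t$; you cannot conclude $E_{t+h}\subset D_t$ this way without already knowing non-fattening, which is precisely what is at stake. The paper instead approximates $D_0$ from \emph{outside} by a decreasing family of smooth mean-convex domains $\widetilde{D}_0^k$ (truncating the self-similar chain at scale $2^{-k}$ and capping with a convex piece), applies classical mean-convex monotonicity to each, and then uses monotone convergence of closed evolutions to get $\widetilde{E}_t=\bigcap_k\widetilde{E}_t^k\subset\bigcap_k\widetilde{D}_0^k=\widetilde{D}_0$. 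Producing such an \emph{outer} smooth mean-convex approximation is exactly the nontrivial geometric step the paper's profile is designed to make easy (see Remark~\ref{remperiod}); a generic bead-and-neck motif need not admit one.
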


A point ${\bf x} \in {\Gamma}_0$ is called {\it $C^{\infty}$-regular point} if there exists some open neighborhood $U$ in $\mathbb{R}^{n+1}$ containing ${\bf x}$ such that $U \cap {\Gamma}_0$ is an embedded $n$-dimensional $C^{\infty}$-manifold. A $C^{\infty}$-regular point ${\bf x} \in {\Gamma}_0=\partial D_0$ is called {\it mean-convex point} if the inward mean curvature at ${\bf x}$ is positive.

\begin{remark}
If we drop connectivity, an example of initial surface developing infinitely many singular epochs is easily provided by taking a countable union of dwindling spheres converging to a point.
\end{remark}

\begin{remark}
The monotonicity in (2) is the same to the mean-convexity of White \cite{White00}.
This monotonicity directly implies that the interface evolution does not fatten so that the level set flow is nothing but a Brakke flow (See also \cite{Giga06},\cite{Ilmanen94}).
\end{remark}

\section{Construction of an example}

We construct an example concretely in order to prove Theorem \ref{maintheorem}. 
This construction is based on the comparison principle of mean curvature flow (Lemma \ref{disjoint}) and two self-shrinking classical solutions (Example \ref{sphere} and \ref{doughnut}).
Using them, we can obtain a ``neck-pinching'' singularity as shown in \cite{Angenent92}.

\begin{lemma}[Avoiding property] \label{disjoint}
Let $\Gamma$, $\Gamma'\subset\mathbb{R}^{n+1}\times[0,\infty)$ be interface evolutions generated by compact initial surfaces $\Gamma_0$, $\Gamma_0'\subset\mathbb{R}^{n+1}$ respectively.
If $\Gamma_0$ and $\Gamma_0'$ are disjoint then so are $\Gamma$ and $\Gamma'$.
\end{lemma}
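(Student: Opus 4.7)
The plan is to encode both interface evolutions as distinct level sets of a single continuous function on $\mathbb{R}^{n+1}$, so that disjointness becomes automatic. Since $\Gamma_0$ and $\Gamma_0'$ are compact and disjoint, I will construct a continuous $\Phi_0:\mathbb{R}^{n+1}\to\mathbb{R}$, constant outside a large ball, with $\{\Phi_0=0\}=\Gamma_0$, $\{\Phi_0<0\}=D_0$, and $\{\Phi_0=c\}=\Gamma_0'$ for some $c\neq 0$, where the sign of $c$ and of $\Phi_0$ near $\Gamma_0'$ are arranged to match the prescribed interior orientation of $\Gamma_0'$. Such a $\Phi_0$ can be built from suitably rescaled and cut-off signed distance functions: in the nested case $E_0'\subset D_0$, I take $c=-1$ (so that $\Gamma_0'$ sits where $\Phi_0$ is negative, with $D_0'=\{\Phi_0<-1\}$); in the exterior case $E_0\cap E_0'=\emptyset$, I take $c>0$ with $D_0'=\{\Phi_0>c\}$; in the mixed case I either use two nonzero levels $c_1<0<c_2$ or first split $\Gamma_0'$ into its inside and outside parts and argue for each separately.

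Next, I will let $\Phi(x,t)$ denote the viscosity solution of the level-set PDE $u_t=|\nabla u|\,\mathrm{div}(\nabla u/|\nabla u|)$ with initial datum $\Phi_0$, whose existence and uniqueness are guaranteed by the Chen--Giga--Goto / Evans--Spruck theory recalled in \cite{Giga06}. By the fundamental invariance property of the level-set framework, the interface evolution $\Gamma_t$ depends only on the initial interface $\Gamma_0$ (together with its prescribed interior), not on the particular defining function, so $\Gamma_t=\{\Phi(\cdot,t)=0\}$. The PDE is translation-invariant in the unknown, so $\Phi-c$ solves the same equation with initial datum $\Phi_0-c$, which is a legitimate defining function for $\Gamma_0'$ with interior $D_0'$; the same invariance then identifies $\Gamma_t'=\{\Phi(\cdot,t)-c=0\}=\{\Phi(\cdot,t)=c\}$.

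The conclusion will then be immediate: since $\Phi(\cdot,t)$ is single-valued and $0\neq c$, no point $x$ can satisfy both $\Phi(x,t)=0$ and $\Phi(x,t)=c$, so $\Gamma_t\cap\Gamma_t'=\emptyset$ for every $t\geq 0$. The main obstacle is really the construction of $\Phi_0$: one has to ensure simultaneously that $\{\Phi_0=0\}$ is exactly $\Gamma_0$ (not a larger set), that $\{\Phi_0=c\}$ is exactly $\Gamma_0'$, and that both implied interior/exterior orientations match those prescribed for the two surfaces. This bookkeeping is mildly delicate in the mixed topological configuration, but it reduces to an elementary Urysohn-type construction using distance functions; everything after it is a direct application of the comparison/uniqueness theory for the level-set equation.
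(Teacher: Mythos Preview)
The paper's own ``proof'' is a bare citation to \cite[Theorem 4.5.2, Lemma 4.5.13]{Giga06}, so there is no argument in the paper to compare against. Your approach---encoding both initial interfaces as distinct level sets of a single $BUC$ function and then invoking the invariance of the interface evolution under the choice of defining function---is exactly the mechanism behind those cited results, so the proposal is correct and in line with the intended reference.

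One simplification is worth noting. The interface evolution $\Gamma_t$ depends only on the set $\Gamma_0$, not on a choice of interior: the level-set PDE is invariant under $u\mapsto -u$. Consequently all of your orientation bookkeeping in the ``mixed case'' is unnecessary. You can bypass it entirely by taking
\[
\Phi_0(x)=\frac{\mathrm{dist}(x,\Gamma_0)}{\mathrm{dist}(x,\Gamma_0)+\mathrm{dist}(x,\Gamma_0')},
\]
which is continuous (the denominator never vanishes since $\Gamma_0\cap\Gamma_0'=\emptyset$), has $\{\Phi_0=0\}=\Gamma_0$ and $\{\Phi_0=1\}=\Gamma_0'$ exactly, and takes values strictly in $(0,1)$ elsewhere. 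A cutoff to the value $1/2$ outside a large ball introduces no new $0$- or $1$-level points, and then your argument with $c=1$ goes through uniformly. By contrast, your proposed alternative of splitting $\Gamma_0'$ into inside and outside parts would require knowing that the interface evolution of a disjoint union equals the union of the interface evolutions, which is itself most cleanly obtained \emph{from} the avoiding property, so that route risks circularity.
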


\begin{proof}
	See \cite[Theorem 4.5.2, Lemma 4.5.13]{Giga06}.
\end{proof}

\begin{example}[Spheres] \label{sphere}
The $n$-sphere $S^n\subset\mathbb{R}^{n+1}$ shrinks to its center without changing shape since the curvature is the same all around. 
The $n$-sphere with radius $R$ disappears at time $R^2/2n$.
\end{example}

\begin{example}[Shrinking doughnuts] \label{doughnut}
For $n\geq 2$, Angenent \cite{Angenent92} showed that there exists a self-shrinking doughnut $A^n\approx S^1\times S^{n-1}\subset\mathbb{R}^{n+1}$.
More precisely $A^n$ is created by rotating suitable simple closed curve $\gamma$ around the $x_0$-axis, where $\gamma$ lies in the $x_0x_1$-plane with $x_1>0$ and symmetric with respect to reflection in the $x_1$-axis.
The doughnut $A^n$ shrinks to its center without changing shape and disappears in finite time.
We define the radius of hole $r$ and the thickness $R$ of $A^n$ by
$$r:=\mathrm{min}\{x_1 \mid (x_0,x_1,0,\ldots,0)\in \gamma\},\quad R:=\mathrm{max}\{2x_0 \mid (x_0,x_1,0,\ldots,0)\in \gamma\}.$$
\end{example}

Now we construct our example.
Let $\phi_0:[0,\frac{1}{2}]\to[0,1]$ be a monotone increasing function of class $C^\infty$ such that $\phi_0\equiv0$ in $[0,\frac{1}{6}]$ and $\phi_0\equiv1$ in $[\frac{1}{3},\frac{1}{2}]$.
Fix a positive constant $\varepsilon_0\in(0,1)$ so that $(1+\max|\phi_0''|)\varepsilon_0^2<1$.
For $\delta\in(0,\frac{\varepsilon_0}{2})$ we define $f_\delta:\mathbb{R}\to[0,\varepsilon_0]$ by
\begin{align*}
f_\delta(x):=
\begin{cases}
(\varepsilon_0-\delta)\phi_0(x-\frac{3}{2})+\delta & (\frac{3}{2}<x\leq 2),\\
(\frac{\varepsilon_0}{2}-\delta)\phi_0(\frac{3}{2}-x)+\delta & (1<x\leq \frac{3}{2}),\\
0 & ({\rm otherwise}).
\end{cases}
\end{align*}

Next, let $\Omega_0$ be a planer convex domain in the $xy$-plane such that $\Omega_0$ is symmetric with respect to reflection in the $x$- and $y$-axis and its boundary $\partial\Omega_0$ is of class $C^\infty$, through four points $(0,\pm\varepsilon_0),(\pm1,0)\in\mathbb{R}^2$, straight in the region $\{|x|\leq\frac{1}{6}\}$ and has positive inner curvature at $(\pm1,0)\in\mathbb{R}^2$.
Then we define $\tilde{f}:\mathbb{R}\to[0,\varepsilon_0]$ so that the graph $y=\tilde{f}(x)$ with $x\in(0,1]$ is contained in $\partial\Omega_0$ and $\tilde{f}\equiv0$ elsewhere.

\begin{figure}[tb]
	\begin{center}
		\def\svgwidth{120mm}
		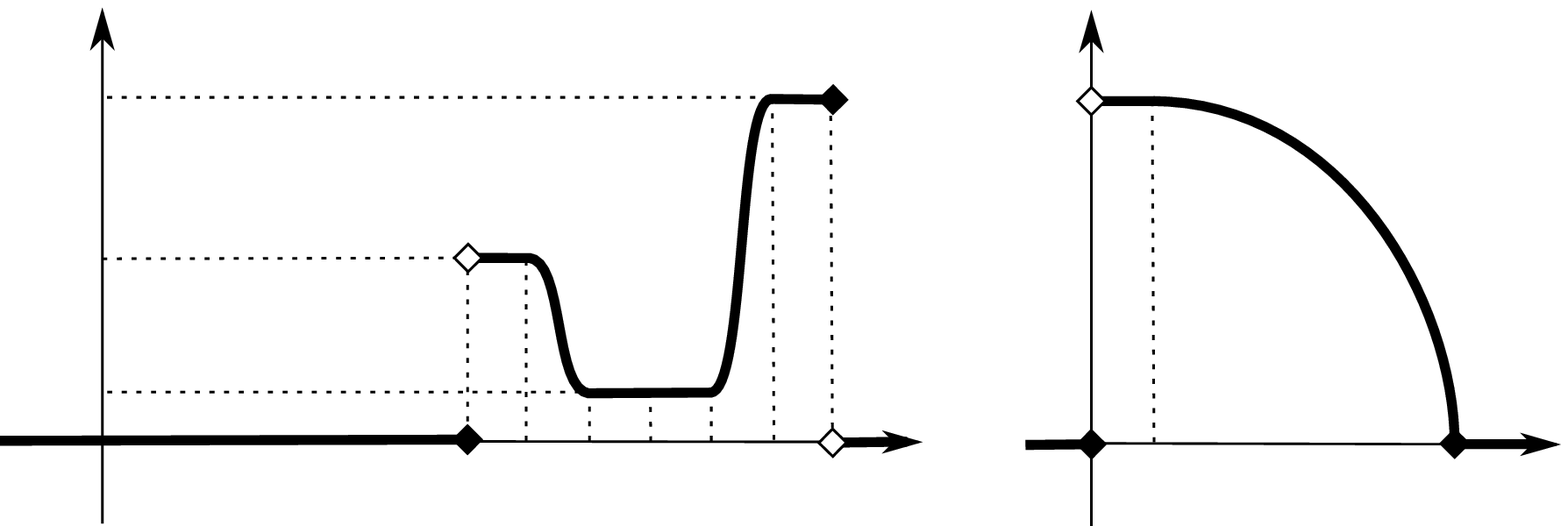
		\caption{graphs of $f_\delta$ and $\tilde{f}$}
		\label{figgraphfg}
	\end{center}
\end{figure}

Finally, we define $F:\mathbb{R}\rightarrow[0,\varepsilon_0]$ by
\begin{align*}
F(x):=\tilde{f}(x-2)+\sum_{k=0}^{\infty}2^{-k}f_{\delta_0}(2^kx),
\end{align*}
where $\delta_0\in(0,\frac{\varepsilon_0}{2})$ is taken sufficiently small so that there exists a self-shrinking doughnut $A^n_0$ with thickness $R_0<\frac{1}{3}$ and radius of hole $r_0>\delta_0$ such that $A^n_0$ disappears earlier than the $n$-sphere with radius $\frac{\varepsilon_0}{12}$.
Notice that $F$ is self-similar in $[0,2]$ in the sense that for $x\in[0,1]$
\begin{align}\label{selfsimilarity}
F(2x)=2F(x).
\end{align}

It turns out that the hypersurface $\widetilde{\Gamma}_0\subset\mathbb{R}^{n+1}$ created by rotating the graph of $F$ with respect to the $x_0$-axis (as Figure \ref{figgamma0}), namely $\widetilde{\Gamma}_0:=\partial\tilde{D}_0$ where
\begin{align*}\label{mainsurf}
\widetilde{D}_0:=\left\{(x_0,\ldots,x_n)\in\mathbb{R}^{n+1}\left|\ F(x_0)>\sqrt{{x_1}^2+\ldots+{x_n}^2}\right.\right\},
\end{align*}
satisfies all conditions of Theorem \ref{maintheorem}.
The surface $\widetilde{\Gamma}_0$ lies in the region $\{0\leq x_0\leq3\}$.
The origin is only one singular point in $\widetilde{\Gamma}_0$.

We shall check that this compact connected axisymmetric surface $\widetilde{\Gamma}_0$ of finite area satisfies the conditions of Theorem \ref{maintheorem}.
The following three propositions \ref{propof1}, \ref{propof2} and \ref{propof3} correspond to the three conditions (1), (2) and (3) respectively.

\begin{proposition}\label{propof1}
All points except the origin in $\widetilde{\Gamma}_0$ are $C^\infty$-regular mean-convex points.
\end{proposition}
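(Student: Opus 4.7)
The plan is to prove the two assertions—$C^\infty$-regularity and mean-convexity of every point of $\widetilde{\Gamma}_0\setminus\{0\}$—in turn, by dividing $\widetilde{\Gamma}_0$ into four regions: the open dyadic pieces (images of $x_0\in(2^{-k},2^{-k+1})$), the dyadic junction circles at $x_0=2^{-k+1}$, the cap piece $x_0\in(2,3]$ coming from $\partial\Omega_0$, and the apex $(3,0,\ldots,0)$.

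For $C^\infty$-regularity I first check smoothness of $F$ on $(0,3]$. On each dyadic interval only the $k$-th summand is nonzero, so $F(x)=2^{-k}f_{\delta_0}(2^kx)$; the function $f_{\delta_0}$ is $C^\infty$ on $(1,2]$ because $\phi_0\equiv 0$ on $[0,1/6]$ forces smooth matching of the two piecewise formulas at $y=3/2$, and on $(2,3]$, $F(x)=\tilde{f}(x-2)$ inherits smoothness from $\partial\Omega_0$. The non-trivial check is at the junctions $x=2^{-k+1}$: the fact that $\phi_0\equiv 1$ on $[1/3,1/2]$ forces $f_{\delta_0}\equiv\varepsilon_0$ on $[11/6,2]$ and $f_{\delta_0}\equiv\varepsilon_0/2$ on $(1,7/6]$, so after rescaling and summing, $F$ is identically $2^{-k}\varepsilon_0$ on an open neighbourhood of $x=2^{-k+1}$, hence trivially $C^\infty$ there. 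The analogous argument using the straight portion $\{|x|\leq 1/6\}$ of $\partial\Omega_0$ gives $F\equiv\varepsilon_0$ near $x=2$. Since $F>0$ on $(0,3)$, the surface off the $x_0$-axis is the regular level set of the $C^\infty$ defining function $G(x)=F(x_0)-\sqrt{x_1^2+\cdots+x_n^2}$, so it is $C^\infty$ there. At the apex, the transversal crossing of $\partial\Omega_0$ at $(1,0)$ with positive inner curvature gives a local expansion $x=1-cy^2+O(y^4)$ with $c>0$, so the rotated surface is a smooth paraboloidal cap.

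For mean-convexity I use the inward mean curvature formula for a surface of revolution $r=F(x_0)$,
$$H=-\frac{F''}{(1+F'^2)^{3/2}}+\frac{n-1}{F\sqrt{1+F'^2}},$$
so that $H>0$ is equivalent to $FF''<(n-1)(1+F'^2)$. On each dyadic piece the substitution $y=2^kx$ shows $F(x)F''(x)=f_{\delta_0}(y)f_{\delta_0}''(y)$, i.e.\ $FF''$ is \emph{scale-invariant} under the self-similarity; combining $f_{\delta_0}\leq\varepsilon_0$ and $|f_{\delta_0}''|\leq\varepsilon_0\max|\phi_0''|$ with the hypothesis $(1+\max|\phi_0''|)\varepsilon_0^2<1$, I get $FF''\leq\varepsilon_0^2\max|\phi_0''|<1\leq n-1\leq(n-1)(1+F'^2)$ uniformly in $k$. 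At each dyadic junction $F$ is locally constant, so the surface is a cylinder of positive radius with $H=(n-1)/F>0$. On $(2,3)$, convexity of $\Omega_0$ forces $F''\leq 0$ (upper boundary of a convex region), so $H\geq(n-1)/(F\sqrt{1+F'^2})>0$. At the apex the paraboloidal model has $n$ equal positive principal curvatures $2c$, giving $H=2cn>0$.

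The main obstacle is the interplay between the two tasks across the full dyadic cascade accumulating at the origin: the local-constancy of $F$ at each junction (for smoothness) and the scale-invariance of $FF''$ (for a uniform mean-convexity bound) are both global structural consequences of the carefully chosen flat portions of $\phi_0$, and the calibration condition $(1+\max|\phi_0''|)\varepsilon_0^2<1$ is calibrated precisely so that a single scale-independent estimate controls $FF''<n-1$ on every dyadic scale, letting the proof conclude uniformly up to the singular origin.
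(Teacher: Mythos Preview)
Your proof is correct and follows essentially the same approach as the paper. For mean-convexity you use exactly the paper's strategy: the surface-of-revolution formula reduces $H>0$ to $FF''<(n-1)(1+F'^2)$, the self-similarity makes $FF''$ scale-invariant so one only needs to check a single dyadic block, and the calibration $(1+\max|\phi_0''|)\varepsilon_0^2<1$ gives $FF''<1$ there; the cap region $\{2<x_0\le 3\}$ is handled by convexity of $\Omega_0$. The only real difference is that you spell out in detail the $C^\infty$-regularity at the dyadic junctions (via the flat portions of $\phi_0$ forcing $F$ to be locally constant) and at the apex, whereas the paper dismisses this as ``easy to check''; your treatment here is more careful but not a different idea.
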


\begin{proof}
It is easy to check that $\widetilde{\Gamma}_0$ is of class $C^\infty$ except the origin thus we only confirm the mean-convexity.
It suffices to confirm in the region $\{0<x_0\leq2\}$ since $\widetilde{\Gamma}_0$ is convex in $\{2<x_0\leq3\}$.
Moreover, the mean-convexity is preserved by dilation hence we only need to confirm in $\{1<x_0\leq2\}$.

The inward mean curvature of $\widetilde{\Gamma}_0$ in $\{0<x_0\leq2\}$ is represented as
\begin{align*}
\cfrac{n-1}{F(x_0)\sqrt{1+(F'(x_0))^2}}\ -\ \cfrac{F''(x_0)}{(\sqrt{1+(F'(x_0))^2})^3}.
\end{align*}
Therefore, to confirm its positivity it suffices to prove that for $1<x\leq2$ the inequality $F(x)F''(x)<1$ holds.
This inequality follows since in this case $F\equiv f_{\delta_0}$ holds and for any $\delta\in(0,\frac{\varepsilon_0}{2})$ we have $f_\delta\leq\varepsilon_0$ and
\begin{align*}
f_\delta''\leq (\varepsilon_0-\delta)|\phi_0''|+\delta \leq (1+\max|\phi_0''|)\varepsilon_0 < \varepsilon_0^{-1}.
\end{align*}
The last inequality follows from the definition of $\varepsilon_0\in(0,1)$.
\end{proof}

We denote the evolutions corresponding to $\widetilde{\Gamma}_0$ by $\widetilde{\Gamma}$, $\widetilde{D}$ and $\widetilde{E}$.

\begin{proposition}\label{propof2}
The monotonicity $\widetilde{E}_{t+h}\subset \widetilde{D}_{t}$ holds for any $t\geq0$ and $h>0$.
\end{proposition}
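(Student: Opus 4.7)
The plan is to verify White's monotonicity criterion \cite{White00} for the level set flow generated by $\widetilde{\Gamma}_0$ via an outer smoothing argument: construct a decreasing family of smooth, strictly mean-convex hypersurfaces $\Gamma_0^\epsilon = \partial E_0^\epsilon$ with $\widetilde{\Gamma}_0 \subset E_0^\epsilon$ and $\bigcap_\epsilon E_0^\epsilon = \widetilde{E}_0$, apply White's monotonicity to each approximation, and pass to the limit via the avoiding property (Lemma \ref{disjoint}).

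For each $\epsilon$ of the form $2^{-K+1}$ I would define $F^\epsilon:\mathbb{R}\to[0,\varepsilon_0]$ by keeping $F^\epsilon = F$ on $[\epsilon,\infty)$ and replacing $F|_{[0,\epsilon]}$ by a smooth, strictly concave cap $F^\epsilon\geq F$ that matches $F$ smoothly at $x_0=\epsilon$ (possible since $F$ is locally constant near the dyadic points $2^{-k}$) and closes up at $x_0=0$ with vertical tangent, so that the surface of revolution $\Gamma_0^\epsilon$ is smooth and compact. It is also strictly mean-convex: outside $[0,\epsilon]$ by Proposition \ref{propof1}, and on $[0,\epsilon]$ by the same mean curvature formula together with $F^\epsilon(F^\epsilon)''<0<1$ coming from strict concavity of the cap.

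Next, White's theorem \cite{White00} (cf.\ \cite{Giga06}) applied to each smooth, strictly mean-convex $\Gamma_0^\epsilon$ yields $E_{t+h}^\epsilon\subset D_t^\epsilon$ for all $t\geq 0$, $h>0$. Since $\widetilde{\Gamma}_0\subset E_0^\epsilon$, Lemma \ref{disjoint} propagates the strict nesting to $\widetilde{E}_t\subset E_t^\epsilon$ for every $t\geq 0$ and every $\epsilon>0$, and chaining these two inclusions gives
$$
\widetilde{E}_{t+h}\subset D_t^\epsilon \quad \text{for every }\epsilon>0,
$$
so that $\widetilde{E}_{t+h}\subset\bigcap_\epsilon D_t^\epsilon$.

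The hard part will be the final identification $\bigcap_\epsilon D_t^\epsilon = \widetilde{D}_t$: a priori the intersection may exceed $\widetilde{D}_t$ by a residual set concentrated at the forward image of the origin. To exclude the origin from $\widetilde{E}_h$ for $h>0$, I would exploit that the cap region of $E_0^\epsilon$ has mean curvature of order $1/\epsilon$, so by comparison with the shrinking spheres of Example \ref{sphere} the cap pinches off from the main body and vanishes under the flow by time $O(\epsilon^2)$; for $\epsilon<C\sqrt{h}$ the cap is already gone from $E_h^\epsilon$, placing $\widetilde{E}_h\subset E_h^\epsilon$ outside a neighborhood of the origin in $D_0^\epsilon$. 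The self-similarity \eqref{selfsimilarity} lets this argument be rescaled to excluding the corresponding residual point from $\widetilde{E}_{t+h}$ at each $t>0$, which combined with the previous inclusion completes $\widetilde{E}_{t+h}\subset\widetilde{D}_t$.
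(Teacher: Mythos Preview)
Your outer-approximation scheme---cap off the singular tip at dyadic scales by a smooth, strictly mean-convex piece---is exactly the paper's construction; the difference is in how the limit is taken. The paper first reduces, via the semigroup and order-preserving properties \cite[Theorem~4.5.2]{Giga06}, to the single statement $\widetilde{E}_h\subset\widetilde{D}_0$ for all $h>0$: once this is known, flowing both sides forward by time $t$ gives $\widetilde{E}_{t+h}\subset\widetilde{D}_t$. With the target fixed at $t=0$, the limit identification becomes trivial---$\bigcap_k\widetilde{D}^k_0=\widetilde{D}_0$ is an equality of explicitly given initial domains---and the monotone-convergence theorem for closed evolutions \cite[Theorem~4.5.4]{Giga06} yields $\widetilde{E}_h=\bigcap_k\widetilde{E}^k_h\subset\bigcap_k\widetilde{D}^k_0=\widetilde{D}_0$ immediately.

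By not making this reduction you are forced to identify $\bigcap_\epsilon D^\epsilon_t$ with $\widetilde{D}_t$ at \emph{positive} times, and here there is a real gap. The intersection of the open evolutions of a strictly decreasing family is in general larger than the open evolution of the limit (already for concentric balls one finds $\bigcap_\epsilon D^\epsilon_t=\widetilde{E}_t\supsetneq\widetilde{D}_t$); the monotone-convergence theorem only controls the \emph{closed} evolutions. Your cap-shrinking sketch argues that the origin leaves $\widetilde{E}_h$ quickly, but that is not the obstruction: the issue is whether points of $\widetilde{\Gamma}_t$ far from the origin could lie in every $D^\epsilon_t$, and nothing in your sketch excludes this. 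The reduction to $t=0$ is precisely the device that dissolves the difficulty.

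One minor correction: Lemma~\ref{disjoint} requires disjoint initial interfaces, whereas your $\Gamma_0^\epsilon$ coincides with $\widetilde{\Gamma}_0$ on $\{x_0\geq\epsilon\}$. The appropriate tool (and the one the paper uses) is the order-preserving property for nested closed sets, \cite[Theorem~4.5.2]{Giga06}.
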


\begin{proof}
By order preserving property \cite[Theorem 4.5.2]{Giga06}, it suffices to prove that $\widetilde{E}_t\subset\widetilde{D}_0$ for any $t>0$.
For any positive integer $k$ we define $F_k:\mathbb{R}\rightarrow[0,\varepsilon_0]$ by
\begin{align*}
F_k(x):=
\begin{cases}
F(x) & (x>2^{-k+1}),\\
\varepsilon_0 2^{-k} & (2^{-k}\leq x\leq2^{-k+1}),\\
2^{-k}\tilde{f}(1-2^{k}x) & (x<2^{-k}),
\end{cases}
\end{align*}
and
$$\widetilde{D}^k_0:=\left\{(x_0,\ldots,x_n)\in\mathbb{R}^{n+1}\left|\ F_k(x_0)>\sqrt{{x_1}^2+\ldots+{x_n}^2}\right.\right\}$$
(see Figure \ref{figgammak}) and denote the corresponding closed evolution by $\widetilde{E}^k$.
By definition, we find that all points in $\partial\widetilde{D}^k_0$ are $C^{\infty}$-regular mean-convex points.
Since the classical mean-convexity implies the monotonicity \cite[Theorem 4.5.7]{Giga06}, we have $\widetilde{E}^k_t\subset\widetilde{D}^k_0$ for all $k$ and $t>0$.
Hence, noting the convergence $\widetilde{D}^k_0\downarrow\widetilde{D}_0$, we find that for all $t>0$
$$\bigcap_{k=0}^{\infty} \widetilde{E}^k_t\subset\bigcap_{k=0}^{\infty} \widetilde{D}^k_0=\widetilde{D}_0.$$
Using monotone convergence property \cite[Theorem 4.5.4]{Giga06}, we have $\widetilde{E}^k_t\downarrow\widetilde{E}_t$ for all $t>0$ thus we conclude that $\widetilde{E}_t\subset\widetilde{D}_0$ for any $t>0$.
\end{proof}

\begin{figure}[tb]
	\begin{center}
		\includegraphics[width=90mm]{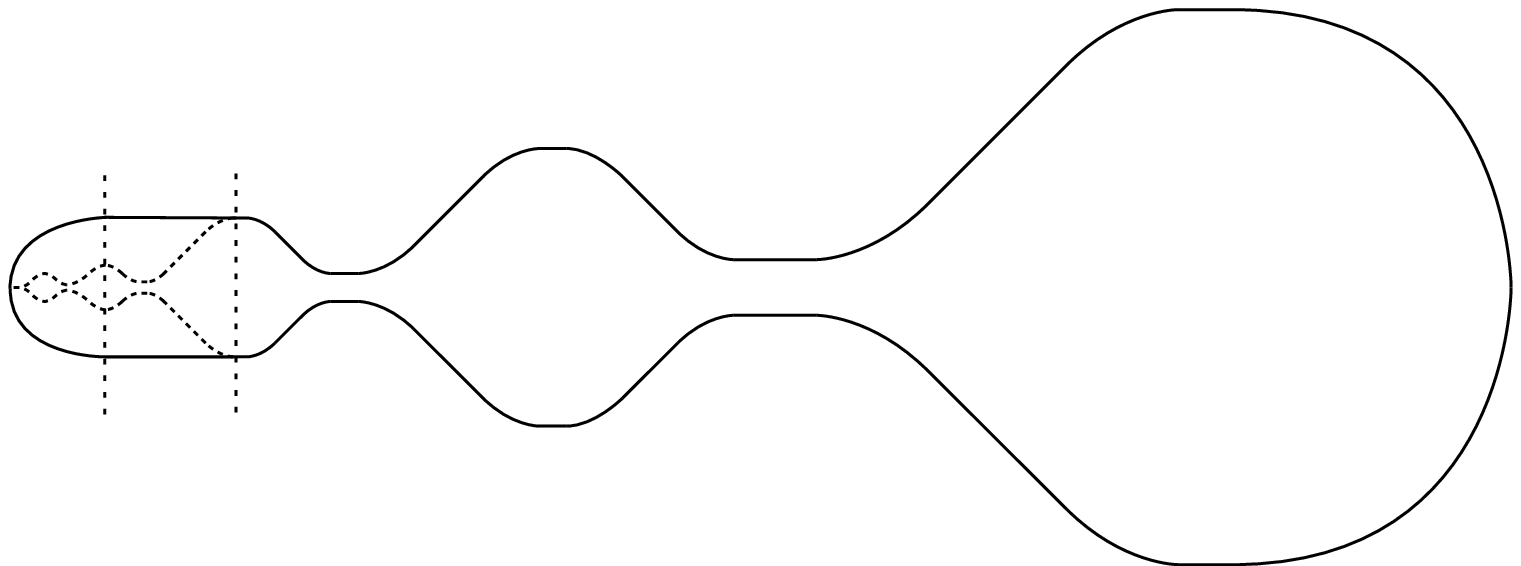}
	\end{center}
	\caption{$\partial\widetilde{D}^k_0$}
	\label{figgammak}
\end{figure}

\begin{proposition}\label{propof3}
For any $\tau>0$ there exists $0<t<\tau$ such that $\widetilde{\Gamma}_t$ has a singularity.
\end{proposition}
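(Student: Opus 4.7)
For each integer $k \geq 1$, the plan is to produce a singular time in $(0, T_k]$, where $T_k := 2^{-2k} T_0$ and $T_0$ is the disappearance time of the Angenent doughnut $A^n_0$. Since $T_k \searrow 0$ as $k \to \infty$, this yields the proposition.

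The comparison objects at scale $k$ are: (i) the rescaled, translated doughnut $A_k := 2^{-k} A^n_0 + (x_k, 0, \ldots, 0)$, with $x_k := 3 \cdot 2^{-k-1}$, placed around the $k$-th neck of $\widetilde{\Gamma}_0$; and (ii) two balls $B_k^\pm$ of radius $2^{-k}\varepsilon_0/12$ inscribed on the axis inside the ``large plateau'' $\{x_0 \in [11/6, 2]\cdot 2^{-k}\}$ (where $F \equiv 2^{-k}\varepsilon_0$) and the ``small plateau'' $\{x_0 \in [1, 7/6]\cdot 2^{-k}\}$ (where $F \equiv 2^{-k}\varepsilon_0/2$), respectively. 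Using $R_0 < 1/3$ and the fact that $f_{\delta_0} \equiv \delta_0$ on $[4/3, 5/3]$ via the self-similarity \eqref{selfsimilarity}, together with $r_0 > \delta_0$, I verify that $A_k \subset \mathbb{R}^{n+1}\setminus\widetilde{E}_0$ and that the thin neck tube lies strictly inside the hole of $A_k$; the inequality $\varepsilon_0 < 1$ then ensures both $B_k^\pm \subset \widetilde{D}_0$, strictly on opposite sides of the plane $\{x_0 = x_k\}$.

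By Lemma \ref{disjoint} and the order-preserving property \cite[Theorem 4.5.2]{Giga06}, $A_{k,t}$ stays disjoint from $\widetilde{\Gamma}_t$ and $B_{k,t}^\pm \subset \widetilde{E}_t$ for each $t$ at which the relevant object exists. Since $A^n_0$ is a self-shrinker, $A_{k,t}$ collapses to $p_k := (x_k, 0, \ldots, 0)$ at time $T_k$; order preservation applied to the closed solid torus bounded by $A_{k,t}$ then gives $p_k \notin \widetilde{E}_{T_k}$ and traps the slice $\widetilde{E}_t \cap \{x_0 = x_k\}$ inside the shrinking hole of radius $\sqrt{(T_k-t)/T_k}\, r_0 \, 2^{-k}$. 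By closedness of the space-time set $\widetilde{E}$ and the collapse of this hole to $\{p_k\}$, the slice of $\widetilde{E}_{T_k}$ at $x_0 = x_k$ is empty. Meanwhile, the condition $T_0 < T_S := (\varepsilon_0/12)^2/(2n)$ yields $T_k < T_{S,k} := 2^{-2k} T_S$, so both balls $B_{k, T_k}^\pm$ have positive radius and sit strictly in $\widetilde{E}_{T_k}$ on their respective sides of $\{x_0 = x_k\}$.

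Therefore $\widetilde{E}_{T_k}$ is nonempty on both sides of $\{x_0 = x_k\}$ yet empty on that plane, which (by continuous projection to the $x_0$-axis) forces $\widetilde{E}_{T_k}$ to be disconnected. Suppose, for contradiction, that no singular time lies in $(0, T_k]$; then the flow is a smooth classical mean curvature flow on this interval, so it preserves the topology of $\widetilde{E}_t$. Combined with the connectedness of $\widetilde{E}_0$ and the continuity of the evolution at $t = 0$ (a consequence of Proposition \ref{propof2} ruling out fattening), this keeps $\widetilde{E}_t$ connected throughout $(0, T_k]$, contradicting the disconnection of $\widetilde{E}_{T_k}$. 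The singular time $t^* \in (0, T_k]$ thus produced corresponds to a classical neck-pinch of $\widetilde{\Gamma}_{t^*}$ at a point with $x_0 = x_k$. The main delicate steps are the two closedness-based arguments---the inclusion $p_k \notin \widetilde{E}_{T_k}$ and the emptiness of the slice at $T_k$---both consequences of the standard level set flow framework in \cite{Giga06}.
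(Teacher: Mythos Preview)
Your proof follows the same strategy as the paper: at each dyadic scale you place an Angenent doughnut around the neck together with balls on either side, and the avoidance/order-preserving principles plus self-similarity force a neck-pinching singularity at a time in $(0,4^{-k}T_0)$, giving singular epochs $t_k\downarrow 0$. The paper's argument is much terser---it simply asserts the neck-pinching and cites Angenent without spelling out the disconnection-versus-connectedness contradiction you give---so your write-up is more detailed; the only step that deserves a word of extra care is the connectedness of $\widetilde{E}_t$ for small $t>0$ (needed for your contradiction, since $\widetilde{\Gamma}_0$ is singular at the origin), which can be obtained from the nested smooth approximations $\widetilde{E}^k$ in Proposition~\ref{propof2}.
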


\begin{proof} 
Denote ${\bf e}_0:=(1,0,\dots,0)\in\mathbb{R}^{n+1}$.
Seeing our construction of $\widetilde{\Gamma}_0$, we notice that $\widetilde{\Gamma}_0$ encloses the two $n$-spheres with radius $\frac{\varepsilon_0}{12}$ centered at ${\bf e}_0$ and $2{\bf e}_0$.
Moreover, noting the definition of $\delta_0$, we also notice that $\widetilde{\Gamma}_0$ is circled by a self-shrinking doughnut centered at $\frac{3}{2}{\bf e}_0$ disappearing earlier than the spheres.
Then we find that the interface evolution $\widetilde{\Gamma}$ has a neck-pinching singularity at some time $t_0\in(0,\frac{\varepsilon_0^2}{144n})$.
By the self similarity (\ref{selfsimilarity}), for any positive integer $k$ we can take the two spheres with radius $\frac{\varepsilon_0}{12\cdot2^k}$ centered at $\frac{1}{2^{k}}{\bf e}_0$ and $\frac{1}{2^{k-1}}{\bf e}_0$ and the doughnut centered at $\frac{3}{2^{k+1}}{\bf e}_0$ disappearing earlier than the spheres as above.
We thus obtain a sequence of singular times $\{t_k\}$ of $\tilde{\Gamma}$ such that $t_k\in(0,\frac{\varepsilon_0^2}{144n\cdot4^k})$.
Since $t_k\downarrow0$, we obtain the consequence.
\end{proof}

\begin{remark}\label{remperiod}
As mentioned in the introduction, we are tempted to construct the self-similar part of an initial surface by a simpler scaling argument, for example rotating some rescaled periodic function as
$$f(x)=x\left(\phi(\varepsilon\log x)+\delta\right),$$
where $\phi$ is a suitable nonnegative periodic function and $\varepsilon$, $\delta$ are sufficiently small positive numbers.
This construction is simpler than ours and should provide a surface satisfying the main desired properties.
Unfortunately, we then need to be careful to confirm the properties rigorously.
For example in the proof of Proposition \ref{propof2}, we made a new surface by cutting and pasting smoothly.
In addition, the obtained surface should enclose the original one and remain mean-convex.
It is not trivial to confirm them for the simply constructed surface. 
However, the surface given in this paper is partially just straight and its overall shape is also clear so that there is no need to be careful in such a process.
\end{remark}

\section*{Acknowledgements}
The author would like to thank Yoshikazu Giga for helpful comments.
He is also grateful to anonymous referees for useful comments for improvement of this paper.
This work was supported by a Grant-in-Aid for JSPS Fellows 15J05166 and the Program for Leading Graduate Schools, MEXT, Japan.

\end{document}